\renewcommand{\a}{\mathfrak{a}}
\renewcommand{\b}{\mathfrak{b}}
\newcommand{\C}{\mathcal{C}}
\newcommand{\E}{\mathcal{E}}
\newcommand{\IR}{\mathbb{R}}
\newcommand{\Ch}{\mathrm{Ch}}
\newcommand{\Lipbs}{\mathrm{Lip}_{\mathrm{bs}}}
\renewcommand{\Re}{\operatorname{Re}}
\newcommand{\sgn}{\operatorname{sgn}}
\newcommand{\supp}{\operatorname{supp}}
\newcommand{\vol}{\mathrm{vol}}
\newcommand{\lra}{\longrightarrow}
\newcommand{\abs}[1]{\lvert#1\rvert}
\newcommand{\norm}[1]{\lVert#1\rVert}
\def\blfootnote{\xdef\@thefnmark{}\@footnotetext}\makeatother
\newtheorem{theorem}{Theorem}[section]
\newtheorem{corollary}[theorem]{Corollary}
\newtheorem{lemma}[theorem]{Lemma}
\newtheorem{definition}[theorem]{Definition}
\theoremstyle{remark}
\title{Stability of Kac regularity under domination of quadratic forms}
\author[Wirth]{Melchior Wirth}
\address{M. Wirth, Mathematisches Institut, Friedrich-Schiller-Universität Jena, D-07737 Jena, Germany}
\email{melchior.wirth@uni-jena.de}
\begin{document}
\maketitle

\blfootnote{2010 \textit{Mathematics Subject Classification.} Primary 46E35. Secondary 31E05, 47A63.}

\begin{abstract}
A domain is called Kac regular for a quadratic form on $L^2$ if the closure of all functions vanishing almost everywhere outside a closed subset of the domain coincides with the set of all functions vanishing almost everywhere outside the domain. It is shown that this notion is stable under domination of quadratic forms. As applications measure perturbations of quasi-regular Dirichlet forms, Cheeger energies on metric measure spaces and Schrödinger operators on manifolds are studied. Along the way a characterization of the Sobolev space with Dirichlet boundary conditions on domains in infinitesimally Riemannian metric measure spaces is obtained.
\end{abstract}

\tableofcontents

\section*{Introduction}

Following Stroock, a domain $\Omega\subset\IR^n$ is called Kac regular if the first exit time of the Brownian motion from $\Omega$ equals the first penetration time of the Brownian motion to $\Omega^c$. In analytic terms, Kac regularity has been proven by Herbst and Zhao \cite{HZ88} to be equivalent to the property that every $u\in W^{1,2}(\IR^n)$ with $u=0$ a.e. on $\Omega^c$ can be approximated in $W^{1,2}$ by elements of $C_c^\infty(\Omega)$.

In a recent preprint by Bei and Güneysu \cite{BG17} it has been shown that Kac regularity enjoys a remarkable stability in the following sense: If $\Omega$ is Kac regular, then the characterizing approximation property holds not only for $W^{1,2}$, but also for the form domain of a large class of Schrödinger operators. It is the aim of this short note to show how this stability can abstractly be understood in terms of domination of quadratic forms.

In \cite{BG17}, the results are derived using deep techniques from stochastic analysis, in particular Feynman-Kac formulae and stochastic parallel transport. In contrast, our approach is purely functional analytical, drawing on the result of a joint article with Lenz and Schmidt \cite{LSW16}, and works in the setting of (quasi-regular) Dirichlet forms so that it can readily be applied not only to quadratic forms on Riemannian manifolds, but also on (infinitesimally Riemannian) metric measure spaces. In this context we also give a characterization of the Sobolev space with Dirichlet boundary conditions on domains in infinitesimally Riemannian metric measure spaces, which might be of independent interest.

This article is structured as follows: In Section \ref{abstract_theory} we introduce the notion of Kac regularity with respect to quadratic forms on $L^2$-spaces and prove the abstract stability result under domination of quadratic forms (Theorem \ref{Kac_regular_stable}). In Section \ref{Dirichlet_forms} we study Kac regular domains for quasi-regular Dirichlet forms, collect several equivalent definitions of the form domain with Dirichlet boundary conditions and prove that Theorem \ref{Kac_regular_stable} implies the stability of Kac regularity under measure perturbations (Theorem \ref{stable_meas_pert}). In Section \ref{mms} we apply the results of Section \ref{Dirichlet_forms} to the Cheeger energy on infinitesimally Riemannian metric measure spaces (Theorems \ref{char_Sobolev}, \ref{stability_potential}). Finally, in Section \ref{Euclidean} we show how the stability result of \cite{BG17} fits into our framework (Theorem \ref{stability_Schrödinger}).

\textbf{Acknowledgment:} The author would like to thank Daniel Lenz for his support and encouragement during the author's ongoing graduate studies and him as well as Marcel Schmidt for fruitful discussions on domination of quadratic forms. He would also like to thank the organizers of the conference \emph{Analysis and Geometry on Graphs and Manifolds} in Potsdam, where the initial motivation of this article was conceived, and the organizers of the intense activity period \emph{Metric Measure Spaces and Ricci Curvature} at MPIM in Bonn, where this work was finished. Financial support by the German Academic Scholarship Foundation (Studienstiftung des deutschen Volkes) and by the German Research Foundation (DFG) via RTG 1523/2 is gratefully acknowledged.

\section{Kac regular domains for quadratic forms}\label{abstract_theory}
In this section we introduce the concept of Kac regular domains for quadratic forms and prove an abstract stability theorem under domination.

Let $X$ be a topological space, $m$ a Borel measure on $X$ and $E\lra X$ a Hermitian vector bundle. If $\a$ is a closed quadratic form on $L^2(X;E)$, let 
\begin{align*}
\norm\cdot_\a=(\norm\cdot_{L^2}^2+\a(\cdot))^{\frac 1 2}.
\end{align*}
For an open subset $\Omega$ of $X$ denote by $D(\a_\Omega)$ the $\norm\cdot_\a$-closure of the set of all $\Phi\in D(\a)$ with $\supp\Phi\subset \Omega$. Here $\supp\Phi$ is understood as the support of the measure $\supp\abs{\Phi}m$. Further let $\a_\Omega$ be the restriction of $\a$ to $D(\a_\Omega)$.

Moreover let $D(\tilde\a_\Omega)=\{\Phi\in D(\a)\mid \Phi=0\text{ a.e. on }\Omega^c\}$ and denote by $\tilde\a_\Omega$ the restriction of $\a$ to $D(\tilde \a_\Omega)$.

\begin{definition}
We call $\Omega$ \emph{Kac regular} for $\a$ if $D(\a_\Omega)=D(\tilde \a_\Omega)$.
\end{definition}

\begin{lemma}
The space $D(\tilde\a_\Omega)$ is closed in $D(\a)$.
\end{lemma}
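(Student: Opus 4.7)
The plan is to use the straightforward observation that $\norm{\cdot}_\a$-convergence dominates $L^2$-convergence, which preserves a.e. vanishing on $\Omega^c$ after passing to a subsequence.

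More concretely, I would take a sequence $(\Phi_n)$ in $D(\tilde\a_\Omega)$ that is $\norm{\cdot}_\a$-convergent to some $\Phi \in D(\a)$ and show $\Phi \in D(\tilde\a_\Omega)$. From the definition $\norm{\Psi}_\a^2 = \norm{\Psi}_{L^2}^2 + \a(\Psi)$ one immediately reads off the inequality $\norm{\Psi}_{L^2} \le \norm{\Psi}_\a$, so that $\Phi_n \to \Phi$ in $L^2(X;E)$. Passing to a subsequence, $\Phi_{n_k} \to \Phi$ pointwise $m$-a.e.

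Since each $\Phi_{n_k}$ vanishes $m$-a.e. on $\Omega^c$ by membership in $D(\tilde\a_\Omega)$, the pointwise a.e. limit $\Phi$ also vanishes $m$-a.e. on $\Omega^c$. Combined with $\Phi \in D(\a)$, this places $\Phi$ in $D(\tilde\a_\Omega)$, proving closedness.

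I do not anticipate a genuine obstacle: the only tiny subtlety is that the natural topology on $D(\a)$ is the graph norm $\norm{\cdot}_\a$ rather than the $L^2$-norm, but the inequality $\norm{\cdot}_{L^2} \le \norm{\cdot}_\a$ handles this at once, and the vector-bundle setting does not affect the argument since $m$-a.e. vanishing of sections is preserved under $m$-a.e. pointwise limits.
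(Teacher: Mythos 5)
Your proof is correct and is exactly the paper's argument, merely written out in detail: the paper likewise notes that $\norm{\cdot}_\a$-convergence implies $L^2$-convergence, extracts an a.e.\ convergent subsequence, and concludes that vanishing a.e.\ on $\Omega^c$ passes to the limit. Nothing to add.
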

\begin{proof}
This follows immediately from the fact that every $\norm\cdot_\a$-convergent sequences also converges in $L^2$ and hence has an a.e. convergent subsequence.
\end{proof}

Now let us turn to domination of quadratic forms (see \cite{Ouh96,MVV05}). The closed quadratic form $\a$ on $L^2(X,m;E)$ is said to be dominated by the closed quadratic form $\b$ on $L^2(X,m)$ if for all $\Phi\in D(\a)$ and $v\in D(\b)$ with $0\leq v\leq \abs{\Phi}$ one has $\abs{\Phi}\in D(\b)$, $v\sgn \Phi\in D(\a)$ and
\begin{align*}
\b(v,\abs{\Phi})\leq \Re \a(\Phi,v \sgn \Phi).
\end{align*}
Here $\sgn\Phi(x)=\Phi(x)/\abs{\Phi(x)}_x$ whenever $\Phi(x)\neq 0$ (the value in the case $\Phi(x)=0$ is obviously irrelevant for the preceding definition).

We will use the following two facts (see \cite{Ouh96}, Corollary 2.5 and Proposition 3.2): The dominating form $\b$ necessarily satisfies the first Beurling-Deny criterion, i.e. $\b(\abs{u})\leq \b(u)$ for all $u\in L^2(X,m)$, and conversely, every form satisfying the first Beurling-Deny criterion is dominated by itself.

\begin{lemma}\label{domination_domains}
Let $\a$ be a closed quadratic form on $L^2(X,m;E)$, $\b$ a closed quadratic form on $L^2(X,m)$ and $\Omega\subset X$ open. If $\a$ is dominated by $\b$, then $\a_\Omega$ is dominated by $\b_\Omega$ and $\tilde \a_\Omega$ is dominated by $\tilde\b_\Omega$.
\end{lemma}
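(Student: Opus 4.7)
For the statement concerning $\tilde\a_\Omega$ and $\tilde\b_\Omega$, the proof is essentially immediate from the definitions. Given $\Phi \in D(\tilde\a_\Omega)$ and $v \in D(\tilde\b_\Omega)$ with $0 \leq v \leq \abs\Phi$, the assumed domination of $\a$ by $\b$ produces $\abs\Phi \in D(\b)$, $v\sgn\Phi \in D(\a)$, and the form inequality. Since $\Phi$ vanishes a.e.\ on $\Omega^c$, so does $\abs\Phi$, putting it in $D(\tilde\b_\Omega)$; and $v\sgn\Phi$ vanishes wherever $v$ does and hence a.e.\ on $\Omega^c$, placing it in $D(\tilde\a_\Omega)$. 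The form inequality is inherited unchanged.

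For the statement concerning $\a_\Omega$ and $\b_\Omega$, the domains are closures rather than being pointwise-defined, so one must verify that the constructions $\abs\Phi$ and $v\sgn\Phi$ actually land in the respective closures. My plan is first to check domination on the cores $\C_\a = \{\Phi \in D(\a) : \supp\Phi \subset \Omega\}$ and $\C_\b$ (defined analogously). On these the assertion is immediate from the original domination, because $\supp\abs\Phi = \supp\Phi$ and $\supp(v\sgn\Phi) \subset \supp v$, so both $\abs\Phi$ and $v\sgn\Phi$ inherit the support condition and sit in $\C_\b \subset D(\b_\Omega)$ and $\C_\a \subset D(\a_\Omega)$ respectively.

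To pass from the cores to the closures, I fix $\Phi \in D(\a_\Omega)$ and $v \in D(\b_\Omega)$ with $0 \leq v \leq \abs\Phi$ and choose $\Phi_n \in \C_\a$ with $\Phi_n \to \Phi$ in $\norm\cdot_\a$. Applying domination with $v$ replaced by $\abs{\Phi_n}$ yields $\b(\abs{\Phi_n}) \leq \a(\Phi_n)$, so $\{\abs{\Phi_n}\}$ is bounded in $D(\b)$ and converges to $\abs\Phi$ in $L^2$; therefore it converges weakly in $D(\b)$, and since $D(\b_\Omega)$ is a closed and hence weakly closed subspace of the Hilbert space $D(\b)$ containing each $\abs{\Phi_n}$, its weak limit $\abs\Phi$ lies in $D(\b_\Omega)$. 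To handle $v\sgn\Phi$, I set $u_n := v \wedge \abs{\Phi_n}$: the lattice structure of $D(\b)$ granted by the first Beurling-Deny criterion for $\b$ gives $u_n \in D(\b)$ with $0 \leq u_n \leq \abs{\Phi_n}$ and $\supp u_n \subset \supp\abs{\Phi_n} \subset \Omega$, so the core case places $u_n\sgn\Phi_n$ in $\C_\a \subset D(\a_\Omega)$. Dominated convergence using $\abs{u_n\sgn\Phi_n} = u_n \leq v \in L^2$ (together with a.e.\ convergence along a subsequence, splitting on $\{\Phi \neq 0\}$ where $\sgn\Phi_n \to \sgn\Phi$ and on $\{\Phi = 0\}$ where both sides vanish) gives $L^2$-convergence $u_n\sgn\Phi_n \to v\sgn\Phi$; combined with a uniform $\a$-bound this upgrades to weak convergence in $D(\a)$, and since the weak and norm closures of the convex set $\C_\a$ agree, one concludes $v\sgn\Phi \in D(\a_\Omega)$.

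The main obstacle I anticipate is establishing the uniform $\a$-bound on $u_n\sgn\Phi_n$. Whereas the $\b$-bound on $\abs{\Phi_n}$ falls out of domination directly by taking $v = \abs{\Phi_n}$, there is no analogous mechanism to upper-bound $\a(u_n\sgn\Phi_n)$: the inequality $\b(u_n,\abs{\Phi_n}) \leq \Re\a(\Phi_n, u_n\sgn\Phi_n)$ controls a cross-term in the wrong direction, and the parallelogram identity applied to $\Phi_n \pm u_n\sgn\Phi_n$ together with core domination only produces further lower bounds on $\a(u_n\sgn\Phi_n)$. The required bound should follow from a continuity statement for the truncation $v \wedge \abs{\Phi_n}$ in $\norm\cdot_\b$, which is readily available when $\b$ is a full Dirichlet form (as in the intended applications) but may need supplementary work under the bare first Beurling-Deny criterion. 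Alternatively, one might try to pass from $L^2$-convergence to weak $D(\a)$-convergence via a Mazur-type convex-combination argument, thereby avoiding an explicit a priori bound. Once the membership $v\sgn\Phi \in D(\a_\Omega)$ is secured, the form inequality for the restricted pair is just the original domination inequality.
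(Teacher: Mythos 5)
Your treatment of the pair $(\tilde\a_\Omega,\tilde\b_\Omega)$ is correct and is exactly the paper's argument: domination of $\a$ by $\b$ supplies the memberships $\abs{\Phi}\in D(\b)$ and $v\sgn\Phi\in D(\a)$, the a.e.\ vanishing on $\Omega^c$ passes to $\abs{\Phi}$ and $v\sgn\Phi$, and the inequality is inherited by restriction. For the pair $(\a_\Omega,\b_\Omega)$ the paper says only that ``the proof is analogous'', which in substance is the core-level observation you make ($\supp\abs{\Phi}=\supp\Phi$ and $\supp(v\sgn\Phi)\subset\supp v$); you are right that this does not by itself handle arbitrary elements of the closures, and your approximation argument for the first membership is sound: $\b(\abs{\Phi_n})\leq\a(\Phi_n)$ gives a uniform bound, $L^2$-convergence identifies the weak limit, and the norm-closed subspace $D(\b_\Omega)$ is weakly closed, so $\abs{\Phi}\in D(\b_\Omega)$.

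The gap you flag for $v\sgn\Phi$ is genuine, and neither of your proposed repairs closes it. Every inequality extractable from the definition of domination bounds $\a$ from \emph{below} by $\b$: for instance, applying the definition to the pair consisting of $\Psi=u_n\sgn\Phi_n$ and $\abs{\Psi}=u_n$ yields $\b(u_n)\leq\a(u_n\sgn\Phi_n)$, the opposite of what you need, and there is no abstract upper bound on $\a(u_n\sgn\Phi_n)$ in terms of $\b(u_n)$ and $\a(\Phi_n)$. The Mazur fallback cannot ``avoid an explicit a priori bound'': Mazur's lemma upgrades weak convergence to norm convergence of convex combinations, and to obtain weak convergence in $D(\a)$ in the first place you need precisely the missing bound. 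Likewise, norm continuity of the truncation $v\wedge\cdot$ in $\norm\cdot_\b$ (available for Dirichlet forms) controls the scalar functions $u_n$, but domination provides no mechanism converting $\b$-control of $u_n$ into $\a$-control of $u_n\sgn\Phi_n$. Completing this half requires an ingredient the paper does not spell out either, e.g.\ a criterion permitting domination to be verified on form cores (cf.\ \cite{MVV05}) or the semigroup characterization of domination (\cite{LSW16}, Theorem 1.33). In mitigation, note that only the statement about $\tilde\a_\Omega$ and $\tilde\b_\Omega$ --- the half you proved completely, and the same half the paper proves in detail --- is actually used in the proof of Theorem \ref{Kac_regular_stable}; so your proposal covers everything the paper's argument genuinely relies on, while honestly exposing that the ``analogous'' closure case is not analogous at all.
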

\begin{proof}
If $\Phi\in D(\tilde b_\Omega)$ and $v\in D(\tilde a_\Omega)$ with $0\leq v\leq\abs{\Phi}$, then $v\sgn \Phi\in D(\a)$ and $\abs{\Phi}\in D(\b)$ since $\a$ is dominated by $\b$. Moreover, $\abs{\Phi},v\sgn \Phi=0$ a.e. on $\Omega^c$. Thus $\abs{\Phi}\in D(\tilde\b_\Omega)$ and $v\sgn\Phi\in D(\tilde \a_\Omega)$. The inequality between $\tilde \a_\Omega$ and $\tilde \b_\Omega$ follows directly by restriction. The proof for $\a_\Omega$, $\b_\Omega$ is analogous.
\end{proof}

\begin{theorem}\label{Kac_regular_stable}
Let $\a$ be a closed quadratic form on $L^2(X,m;E)$, $\b$ a closed quadratic form on $L^2(X,m)$ and $\Omega\subset X$ open. If $\a$ is dominated by $\b$ and $\Omega$ is Kac regular for $\b$, then it is Kac regular for $\a$.
\end{theorem}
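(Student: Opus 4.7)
The plan is to prove the nontrivial inclusion $D(\tilde\a_\Omega)\subseteq D(\a_\Omega)$ by lifting approximants from the scalar form $\b$ to the dominated form $\a$ via the correspondence $v\mapsto v\sgn\Phi$. So fix $\Phi\in D(\tilde\a_\Omega)$; the task is to produce a sequence $\Phi_n\in D(\a)$ with $\supp\Phi_n\subset\Omega$ and $\Phi_n\to\Phi$ in $\norm\cdot_\a$.

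First I would pass from $\Phi$ to $|\Phi|$. Plugging $v=|\Phi|$ into the domination inequality gives $|\Phi|\in D(\b)$, and since $\Phi=0$ a.e.\ on $\Omega^c$ the same is true of $|\Phi|$, hence $|\Phi|\in D(\tilde\b_\Omega)$. By hypothesis $\Omega$ is Kac regular for $\b$, so there exist $w_n\in D(\b)$ with $\supp w_n\subset\Omega$ and $w_n\to|\Phi|$ in $\norm\cdot_\b$. To prepare the sequence for domination, I would replace $w_n$ by its truncation $v_n=(w_n\vee 0)\wedge|\Phi|$. The dominating form $\b$ satisfies the first Beurling--Deny criterion (cited right before Lemma~\ref{domination_domains}), so truncations and lattice operations act as contractions on $D(\b)$; this yields $v_n\in D(\b)$, $0\leq v_n\leq|\Phi|$, $v_n\to|\Phi|$ in $\norm\cdot_\b$, and $\supp v_n\subset\supp w_n\subset\Omega$.

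Next, for each $n$ set $\Phi_n=v_n\sgn\Phi$. Because $\a$ is dominated by $\b$ and $0\leq v_n\leq|\Phi|$, the definition of domination directly yields $\Phi_n\in D(\a)$. Moreover $|\Phi_n|=v_n$ on the set $\{\Phi\neq 0\}$ and vanishes elsewhere, so $\supp\Phi_n\subset\supp v_n\subset\Omega$, which means each $\Phi_n$ is an admissible candidate for approximating an element of $D(\a_\Omega)$.

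The main obstacle is the last step: showing that $\Phi_n\to\Phi$ in $\norm\cdot_\a$. The $L^2$-convergence is immediate from $|{\Phi_n-\Phi}|=|\Phi|-v_n\to 0$ in $L^2$, so the real content is a form-norm estimate. Here I would invoke the approximation result for dominated forms from \cite{LSW16}, which turns the $\norm\cdot_\b$-convergence $v_n\to|\Phi|$ into $\norm\cdot_\a$-convergence $v_n\sgn\Phi\to\Phi$ by exploiting, essentially via polarization of the domination inequality applied to the differences $(v_n-v_m)^\pm\leq|\Phi|$, that $(\Phi_n)$ is Cauchy in $\norm\cdot_\a$; closedness of $\a$ then identifies the limit as $\Phi$. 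Once this is in hand, $\Phi\in D(\a_\Omega)$ and the proof is complete.
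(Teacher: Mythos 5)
Your proposal follows essentially the same route as the paper. The paper's own proof verifies (Lemma \ref{domination_domains}) that $\tilde\a_\Omega$ is dominated by $\tilde\b_\Omega$ and then cites \cite{LSW16}, Theorem 2.2, as a black box for the transfer of density from $D_\b$ to $D_\a$; you instead unfold part of that black box by hand (approximate $\abs{\Phi}$ by supported elements of $D(\b)$, truncate, push forward via $v\mapsto v\sgn\Phi$) while still delegating the analytic heart --- that $\norm\cdot_\b$-convergence $v_n\to\abs{\Phi}$ with $0\le v_n\le\abs{\Phi}$ forces $\norm\cdot_\a$-convergence $v_n\sgn\Phi\to\Phi$ --- to \cite{LSW16}. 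That last delegation is legitimate, and indeed necessary: the one-sided domination inequality by itself only bounds $\Re\a(\Phi,v\sgn\Phi)$ from below by $\b(v,\abs{\Phi})$, so the Cauchy estimate you sketch ``by polarization'' is not a one-line consequence; this is exactly the content of the cited result, which is why the paper invokes it wholesale.

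Two points in the unfolded portion need repair. First, a small circularity: you cannot ``plug $v=\abs{\Phi}$'' into the definition of domination to conclude $\abs{\Phi}\in D(\b)$, since $v$ must already be known to lie in $D(\b)$; take $v=0$ instead, which satisfies $0\le v\le\abs{\Phi}$ and already yields $\abs{\Phi}\in D(\b)$. Second, and more substantively, the claim that $v_n=(w_n\vee 0)\wedge\abs{\Phi}\to\abs{\Phi}$ in $\norm\cdot_\b$ does not follow from the first Beurling--Deny criterion: that criterion makes lattice operations \emph{bounded} on $D(\b)$ (e.g. $\b(v_n)^{1/2}\le \b(w_n)^{1/2}+\b(\abs{\Phi})^{1/2}$), not \emph{continuous} in form norm. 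What you actually obtain is $v_n\to\abs{\Phi}$ in $L^2$ (from the pointwise $1$-Lipschitz bound $\abs{v_n-\abs{\Phi}}\le\abs{w_n-\abs{\Phi}}$) together with $\sup_n\b(v_n)<\infty$, hence only weak form-convergence along a subsequence; strong convergence is restored by passing to convex combinations (Mazur/Banach--Saks), which is harmless here because finite convex combinations of the $v_n$ still satisfy $0\le\cdot\le\abs{\Phi}$ and have support in $\Omega$. This is precisely the technicality handled inside the proof of \cite{LSW16}, Theorem 2.2. With the Mazur correction inserted --- or, more cleanly, by checking the support/ideal structure and citing that theorem directly, as the paper does --- your argument is sound and recovers the paper's proof.
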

\begin{proof}
By Lemma \ref{domination_domains}, the form $\tilde\a_\Omega$ is dominated by $\tilde \b_\Omega$. Let $D_\a$ and $D_\b$ be the set of all elements of $D(\a)$ and $D(\b)$ respectively with support in $\Omega$. By assumption, $D_\b$ is dense in $D(\tilde\b_\Omega)$. It follows from \cite{LSW16}, Theorem 2.2 (see also Corollary 2.3) that $D_\a$ is dense in $D(\tilde\a_\Omega)$. Thus $\Omega$ is Kac regular for $\a.$
\end{proof}

\section{Quasi-regular Dirichlet forms}\label{Dirichlet_forms}
In this section we give a characterization of $D(\E_\Omega)$ that is better suited for applications in the case when $\E$ is a quasi-regular Dirichlet form.

The definition of quasi-regular Dirichlet forms along with all necessary properties can be found in \cite{MR92}. At this point let us just mention that every Dirichlet form satisfies the first Beurling-Deny criterion and is thus dominated by itself.

Further let us recall some standard terminology: An ascending sequence $(F_k)$ of closed subsets of $X$ is called \emph{nest} if $\bigcup_k D(\E)_{F_k}$ is dense in $\E$. A Borel set $B\subset X$ is called \emph{exceptional} if $B\subset \bigcap_k F_k^c$ for some nest $(F_k)$. A property is said to hold \emph{quasi-everywhere}, abbreviated \emph{q.e.}, if it holds outside an exceptional set. A function $u$ is called \emph{quasi-continuous} if there exists a nest $(F_k)$ such that $u|_{F_k}$ is continuous for all $k\in\mathbb{N}$. Every element $u$ of the domain of a quasi-regular Dirichlet form has a quasi-continuous version, determined up to equality q.\,e. We will denote it by $\tilde u$.

Let $X$ be a Hausdorff space, $m$ a $\sigma$-finite Borel measure on $X$ of full support, and $\E$ a quasi-regular Dirichlet form on $L^2(X,m)$. For a Borel set $B\subset X$ let
\begin{align*}
D(\E)_B=\{u\in D(\E)\mid u=0\text{ q.e. on }B^c\}.
\end{align*}

Alternatively, $D(\E)_B$ can be characterized as the closure of all functions with compact support in $B$, as the following lemma (\cite{RS95}, Lemma 2.7) shows.
\begin{lemma}\label{compactly_supported}
For every Borel set $B\subset X$ there exists an ascending sequence $(F_k)$ of compact subsets of $B$ such that $\bigcup_k D(\E)_{F_k}$ is $\norm\cdot_\E$-dense in $D(\E)_B$.
\end{lemma}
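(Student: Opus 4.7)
Since $L^2(X,m)$ is separable, so is $D(\E)_B$; pick a countable $\norm{\cdot}_\E$-dense subset $\{u_m\}_{m\in\mathbb{N}}$, each of whose quasi-continuous versions $\tilde u_m$ vanishes q.e.\ on $B^c$. My plan is to read off the compact sets $F_k$ from these $\tilde u_m$ via the quasi-topology supplied by quasi-regularity.

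By quasi-regularity, a standard nest-refinement argument produces an $\E$-nest $(K_n)$ of compact subsets of $X$ on which every $\tilde u_m$ is continuous and which avoids the exceptional set $N = \bigcup_m \{\tilde u_m\neq 0\}\cap B^c$. Then $\tilde u_m(x)=0$ for every $x\in K_n\cap B^c$ and every $m,n$. I would define
\[
F_n := K_n \cap \bigcup_{m\le n}\bigl\{x\in X : \abs{\tilde u_m(x)}\ge 1/n\bigr\}.
\]
Continuity of the $\tilde u_m$ on $K_n$ makes $F_n$ a closed subset of $K_n$, hence compact, and any $x\in F_n$ satisfies $\abs{\tilde u_m(x)}\ge 1/n$ for some $m$, which precludes $x\in B^c$. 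Replacing $F_n$ by $F_1\cup\dots\cup F_n$ gives an ascending sequence of compact subsets of $B$.

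For density, fix $u\in D(\E)_B$ and $\eta>0$, and pick $m$ with $\norm{u-u_m}_\E<\eta$. I would approximate $u_m$ in two further layers: first by the bounded normal-contraction truncation $v_{M,n} := \sgn(u_m)\bigl((\abs{u_m}\wedge M)-1/n\bigr)_+$, which lies in $D(\E)\cap L^\infty$ and returns to $u_m$ in $\norm{\cdot}_\E$ as $M,n\to\infty$; and then by multiplying by a quasi-continuous cutoff $\chi_n\in D(\E)\cap L^\infty$ with $\tilde\chi_n=1$ q.e.\ on $K_n$ and $\tilde\chi_n=0$ q.e.\ outside $K_{n+1}$. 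Since $D(\E)\cap L^\infty$ is an algebra, $\chi_n v_{M,n}\in D(\E)$; its quasi-support lies in $K_{n+1}\cap\{\abs{\tilde u_m}\ge 1/n\}\subset F_{n+1}$ whenever $m\le n+1$, so $\chi_n v_{M,n}\in D(\E)_{F_{n+1}}$, and a diagonalisation in $(M,n)$ recovers $u_m$ and hence $u$.

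The main obstacle is the construction of the cutoffs $\chi_n$ and the control of multiplication by them in $\norm{\cdot}_\E$: in the abstract Hausdorff framework of \cite{MR92} there is no smooth or metric structure, so the $\chi_n$ must be produced from the $1$-equilibrium potentials of $K_n$ relative to $K_{n+1}$, and one has to invoke the multiplier property of bounded quasi-continuous elements of $D(\E)$. This is the technical core of the argument of \cite{RS95}, and the rest of the proof essentially reduces to the combinatorics of the three-parameter approximation described above.
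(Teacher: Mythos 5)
Your construction of the sets $F_n$ is sound and is indeed how the standard argument begins: a countable $\norm\cdot_\E$-dense family in $D(\E)_B$ (separability here comes from quasi-regularity rather than from $L^2(X,m)$, which need not be separable under the stated hypotheses), a compact nest refined so that all $\tilde u_m$ are continuous on its elements and so that it avoids the exceptional set $\bigcup_m\{\tilde u_m\neq 0\}\cap B^c$, then superlevel sets intersected with the nest. (For the record, the paper offers no proof at all, citing \cite{RS95}, Lemma 2.7, so your attempt must stand on its own.) The genuine gap is in the density step, precisely where you flag it: the cutoffs $\chi_n\in D(\E)\cap L^\infty$ with $\tilde\chi_n=1$ q.e.\ on $K_n$ and $\tilde\chi_n=0$ q.e.\ on $K_{n+1}^c$ need not exist, and there is no such thing as the ``$1$-equilibrium potential of $K_n$ relative to $K_{n+1}$'': relative equilibrium potentials are defined via the part form on an \emph{open} ambient set, whereas $K_{n+1}$ is merely compact. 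In the infinite-dimensional situations that quasi-regularity is designed for, every compact set has empty interior, so no open set fits between consecutive nest elements; the $1$-equilibrium potential of $K_n$ itself is $1$ q.e.\ on $K_n$ but carries no support constraint whatsoever. What you need is nonemptiness of $\{w\in D(\E)_{K_{n+1}}:\tilde w\geq 1\text{ q.e.\ on }K_n\}$, which is essentially of the same nature as the statement being proved; in the regular theory such Urysohn-type cutoffs come from the core $D(\E)\cap C_c(X)$, and the entire difficulty of the quasi-regular lemma is that they are absent. Even granting the $\chi_n$, the convergence $\chi_n v_{M,n}\to u_m$ in $\norm\cdot_\E$ is not controlled: the product estimate involves $\norm{v_{M,n}-u_m}_\infty\,\E(\chi_n)^{1/2}$, with $u_m$ unbounded and $\E(\chi_n)$ unbounded in $n$.

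The standard repair avoids multiplication by cutoffs altogether and uses the nest property combined with lattice operations and weak compactness. It suffices to place each $u_m$ in the closure; write $u_m=u_m^+-u_m^-$ with $u_m^\pm\in D(\E)_B$ and treat $u_m^+$. Since $(K_j)$ is a nest, choose $w_j\in D(\E)_{K_j}$ with $w_j\to u_m^+$ in $\norm\cdot_\E$ and set $h_j:=w_j^+\wedge u_m^+$. Then $h_j$ vanishes q.e.\ on $K_j^c$ (where $\tilde w_j=0$), satisfies $0\leq h_j\leq u_m^+$, converges to $u_m^+$ in $L^2$, and is $\E$-bounded because $\E(f\wedge g)^{1/2}\leq \E(f)^{1/2}+\E(g)^{1/2}$ for Dirichlet forms. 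The truncation $(h_j-1/k)^+$ then vanishes q.e.\ outside $K_j\cap\{\abs{\tilde u_m}\geq 1/k\}$, which is contained in $F_l$ for $l\geq\max(j,k,m)$, so $(h_j-1/k)^+\in D(\E)_{F_l}$, still with $\E((h_j-1/k)^+)\leq\E(h_j)$ by the contraction property. The resulting family is $\norm\cdot_\E$-bounded and $L^2$-convergent to $u_m^+$, hence converges weakly in the Hilbert space $D(\E)$; since $\bigcup_l D(\E)_{F_l}$ is a linear subspace, its weak and strong closures coincide (Mazur, or Ces\`aro means as in \cite{MR92}, Lemma I.2.12), so $u_m^+$ lies in its $\norm\cdot_\E$-closure. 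This weak-convergence mechanism does the work you assigned to the cutoffs $\chi_n$, and it also renders your bounded truncation layer $v_{M,n}$ unnecessary.
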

\begin{corollary}\label{char_q.e.}
If $\Omega\subset X$ is open, then $D(\E_\Omega)=D(\E)_\Omega$.
\end{corollary}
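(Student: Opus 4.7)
The plan is to prove the two inclusions separately, using the setup $D(\E_\Omega) = \overline{\{\Phi \in D(\E) : \supp\Phi \subset \Omega\}}$ and exploiting Lemma \ref{compactly_supported} for one direction and a standard quasi-continuity fact for the other.

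First I would verify the inclusion $D(\E)_\Omega \subseteq D(\E_\Omega)$. By Lemma \ref{compactly_supported} applied to $B = \Omega$, there is an ascending sequence $(F_k)$ of compact subsets of $\Omega$ with $\bigcup_k D(\E)_{F_k}$ dense in $D(\E)_\Omega$. For any $u \in D(\E)_{F_k}$ one has $u = 0$ q.e.\ on $F_k^c$; since exceptional sets carry no $m$-mass (as $m$ has full support and the form is quasi-regular), this implies $u = 0$ $m$-a.e.\ on $F_k^c$. Consequently $\supp(\abs{u}m) \subseteq F_k \subseteq \Omega$, so $u$ lies in the set whose closure defines $D(\E_\Omega)$. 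Taking closures and noting that the union is dense gives $D(\E)_\Omega \subseteq D(\E_\Omega)$.

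For the reverse inclusion $D(\E_\Omega) \subseteq D(\E)_\Omega$, I would first check that $D(\E)_\Omega$ is closed in $(D(\E),\norm\cdot_\E)$: a $\norm\cdot_\E$-convergent sequence has a subsequence that converges quasi-everywhere (after passing to quasi-continuous versions via a common nest), so the q.e.\ vanishing condition on $\Omega^c$ is preserved. It then suffices to check that every $\Phi$ in the defining set (i.e.\ with $\supp(\abs{\Phi}m) \subseteq \Omega$) already belongs to $D(\E)_\Omega$. For such $\Phi$, the set $G = (\supp(\abs{\Phi}m))^c$ is open, contains $\Omega^c$, and satisfies $\Phi = 0$ $m$-a.e.\ on $G$. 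Invoking the standard fact that a quasi-continuous function vanishing $m$-a.e.\ on an open set vanishes quasi-everywhere there (a direct consequence of the definition of quasi-continuity together with the full support of $m$), we conclude $\tilde\Phi = 0$ q.e.\ on $G$, hence on $\Omega^c$, so $\Phi \in D(\E)_\Omega$.

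The only nontrivial step is the quasi-continuity fact used in the second paragraph; everything else is a bookkeeping exercise in translating between $m$-a.e.\ vanishing of a measure support and q.e.\ vanishing of a quasi-continuous representative. This fact is a well-known consequence of quasi-regularity (one restricts to a nest $(F_k')$ on which $\tilde\Phi$ is continuous, observes that $\{\tilde\Phi \neq 0\}\cap G \cap F_k'$ is relatively open, and uses full support of $m$ to conclude it is empty up to an exceptional set), and I would cite it from \cite{MR92} rather than reprove it.
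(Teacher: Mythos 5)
Your proof is correct and is precisely the argument the paper leaves implicit in stating this as a corollary of Lemma \ref{compactly_supported}: one inclusion from that lemma together with the fact that exceptional sets are $m$-null, the other from closedness of $D(\E)_\Omega$ under $\norm\cdot_\E$-convergence plus the standard fact that a quasi-continuous function vanishing $m$-a.e.\ on an open set vanishes q.e.\ there. One small caveat on your parenthetical sketch of that last fact: full support of $m$ alone does not guarantee that a nonempty relatively open subset of a nest member $F_k'$ has positive measure, so one should first pass to the $m$-regularized nest $\supp(\mathbf{1}_{F_k'}m)$ --- but since you cite the fact from \cite{MR92} rather than reprove it, this does not affect the correctness of your argument.
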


Note that in this situation, $\E_\Omega$ is again a quasi-regular Dirichlet form (\cite{RS95}, Lemma 2.12).

In many cases it is more customary to define the Sobolev space with Dirichlet boundary conditions not as the closure of all compactly supported functions, but as the closure of a certain subset (e.g. continuous functions or smooth ones). The following definition gives a general setup for these situations.

\begin{definition}
A subalgebra $\C$ of $D(\E)\cap C_b(X)$ is called \emph{generalized special standard core} if 
\begin{itemize}
\item $\C$ is dense in $D(\E)$,
\item for every $\epsilon>0$ there exists an increasing $1$-Lipschitz function $C_\epsilon\colon \IR\lra\IR$ such that $C_\epsilon(t)=t$ for $t\in [0,1]$, $-\epsilon\leq C_\epsilon\leq 1+\epsilon$ and $C_\epsilon\circ u\in \C$ for all $u\in\C$,
\item For every compact $K\subset X$ and every open $G\subset X$ with $K\subset G$ there exists a $u\in\C$ such that $u\geq 0$, $u|_K=1$ and $\supp u\subset G$.
\end{itemize}
\end{definition}
As the named suggest, this concept generalizes special standard cores in the regular setting (see \cite{FOT11}, Section 1.1). In particular, if $\E$ is a regular Dirichlet form, then $D(\E)\cap C_c(X)$ is a special standard core. More examples will be discussed in the following sections.

Given the characterization from Lemma \ref{compactly_supported}, the following lemma can be proven exactly as Lemma 2.3.4 in \cite{FOT11}.
\begin{lemma}\label{char_core}
If $\C$ is a generalized special standard core, then $\{u\in \C\mid \supp u\subset \Omega\}$ is $\norm\cdot_\E$-dense in $D(\E_\Omega)$.
\end{lemma}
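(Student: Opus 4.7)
The plan is to follow the proof of Lemma 2.3.4 in \cite{FOT11}, combining the three axioms of a generalized special standard core with the reduction to compact supports provided by Lemma \ref{compactly_supported}. Write $\C_\Omega:=\{v\in\C\mid\supp v\subset\Omega\}$; the goal is that $\C_\Omega$ is $\norm\cdot_\E$-dense in $D(\E_\Omega)$, as the reverse inclusion is clear from the definition.

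First I would reduce to approximating functions whose support is compactly contained in $\Omega$. By Corollary \ref{char_q.e.}, $D(\E_\Omega)=D(\E)_\Omega$, and Lemma \ref{compactly_supported} furnishes for each $u\in D(\E)_\Omega$ a sequence $u_k\to u$ in $\norm\cdot_\E$ with $u_k\in D(\E)_{F_k}$ and $F_k\subset\Omega$ compact. Hence it suffices to show $D(\E)_F\subset\overline{\C_\Omega}$ for every compact $F\subset\Omega$, where the closure is taken in $\norm\cdot_\E$. Using that positive and negative parts and truncations are normal contractions on the Dirichlet form, and after a rescaling, I may assume that $u\in D(\E)_F$ satisfies $0\leq u\leq 1$.

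Next I would assemble the approximants. The third axiom supplies a cutoff $\phi\in\C$ with $\phi\geq 0$, $\phi|_F=1$ and $\supp\phi\subset\Omega$; since $u=0$ q.e., and \emph{a fortiori} a.e., on $F^c$, this gives $\phi u=u$ almost everywhere. The first axiom (density) provides $w_n\in\C$ with $w_n\to u$ in $\norm\cdot_\E$, and the second axiom lets me truncate: $w_n^{(\epsilon)}:=C_\epsilon\circ w_n\in\C$ is uniformly bounded by $1+\epsilon$ in $L^\infty$. Since $C_\epsilon$ is a normal contraction and $C_\epsilon(u)=u$ (because $C_\epsilon=\mathrm{id}$ on $[0,1]$), continuity of normal contractions on $D(\E)$ yields $w_n^{(\epsilon)}\to u$ in $\norm\cdot_\E$ as $n\to\infty$ for each fixed $\epsilon>0$. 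The products $\phi\,w_n^{(\epsilon)}\in\C$ then lie in $\C_\Omega$ by the subalgebra axiom and the support containment.

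The remaining step is to verify $\phi\,w_n^{(\epsilon)}\to u$ in $\norm\cdot_\E$, and this is where the main obstacle lies. Convergence in $L^2$ is immediate from the boundedness of $\phi$, but convergence of the energy part requires continuity of multiplication by the bounded function $\phi$ on $D(\E)$. The natural tool is the Leibniz-type bound $\E(\phi g)\leq 2\norm\phi_\infty^2\,\E(g)+2\norm g_\infty^2\,\E(\phi)$ applied with $g=w_n^{(\epsilon)}-u$: the first summand tends to zero with $n$, while the second is merely \emph{bounded} by the uniform $L^\infty$-control from axiom two rather than vanishing. The way out is a two-parameter limit, letting first $n\to\infty$ and then $\epsilon\to 0$, combined with a diagonal extraction; this exploits the identity $\phi u=u$ and the closability of $\E$ to upgrade the boundedness into strong convergence. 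This is precisely the technical bookkeeping carried out in \cite[Lemma 2.3.4]{FOT11}, and it transfers verbatim once the preceding three paragraphs are in place, since every input it requires is supplied by the three axioms together with Lemma \ref{compactly_supported}.
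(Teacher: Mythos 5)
Your proposal is correct and takes exactly the route the paper intends: the paper's entire proof is the one sentence that, given Lemma \ref{compactly_supported}, the claim ``can be proven exactly as Lemma 2.3.4 in \cite{FOT11}'', and your reduction to $u\in D(\E)_F$ with $F\subset\Omega$ compact and $0\leq u\leq 1$, the cutoff $\phi$ from the third axiom, the truncations $C_\epsilon\circ w_n$ from the second, and the products $\phi\,w_n^{(\epsilon)}\in\C_\Omega$ via the subalgebra property constitute precisely the skeleton of that argument, with the three axioms substituting for the properties of a special standard core.

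One caveat on your final paragraph: the mechanism you sketch there is not the one \cite{FOT11} actually uses, and as literally described it would not work. A two-parameter diagonal extraction together with closedness of $\E$ cannot upgrade $\norm{\cdot}_\E$-boundedness to strong convergence --- closedness yields only lower semicontinuity, and a $\norm{\cdot}_\E$-bounded sequence converging in $L^2$ converges only \emph{weakly} in $(D(\E),\norm{\cdot}_\E)$. The device in \cite{FOT11} is Banach--Saks/Mazur: the sequence $\phi\,w_n^{(\epsilon_n)}$ is $\norm{\cdot}_\E$-bounded by your Leibniz estimate and converges to $\phi u=u$ in $L^2$, hence converges to $u$ weakly in $(D(\E),\norm{\cdot}_\E)$, and Cesàro means of a suitable subsequence converge strongly; since $\C_\Omega$ is a linear subspace (here the algebra structure of $\C$ earns its keep), these convex combinations remain in $\C_\Omega$, so $u$ lies in its $\norm{\cdot}_\E$-closure. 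The same remark applies to your appeal to ``continuity of normal contractions on $D(\E)$'' for $w_n^{(\epsilon)}\to u$: what the standard toolkit provides directly is again weak convergence plus the Cesàro-mean trick, which suffices for the density statement. With that substitution --- which is exactly the bookkeeping in \cite{FOT11}, as you correctly anticipate --- your proof is complete and coincides with the paper's.
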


Next we study a class of perturbations of quasi-regular Dirichlet forms that are dominated by the original form.

A Borel measure $\mu$ on $X$ is called \emph{smooth} if $\mu(B)=0$ for every exceptional set $B$ and there exists a nest $(F_k)$ of compact sets such that $\mu(F_k)<\infty$ for all $k\in\mathbb{N}$.

If $\mu$ is a smooth measure, define the quadratic form $\E^\mu$ by
\begin{align*}
D(\E^\mu)=\{u\in D(\E)\mid \tilde u\in L^2(X,\mu)\},\,\E^\mu(u)=\E(u)+\int_X\tilde u^2\,d\mu.
\end{align*}
The form $\E^\mu$ is again a quasi-regular Dirichlet form (\cite{RS95}, Proposition 2.3).

\begin{lemma}\label{measure_pert_dom}
The form $\E^\mu$ is dominated by $\E$.
\end{lemma}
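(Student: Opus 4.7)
The plan is to check the three conditions in the definition of domination directly with $\a=\E^\mu$ and $\b=\E$. Fix $\Phi\in D(\E^\mu)$ and $v\in D(\E)$ with $0\leq v\leq\abs{\Phi}$.

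The membership conditions come first. That $\abs{\Phi}\in D(\E)$ and $v\sgn\Phi\in D(\E)$ both follow from the self-domination of $\E$ recalled just before Lemma \ref{domination_domains}, since $\E$ is a Dirichlet form. To upgrade the latter to $v\sgn\Phi\in D(\E^\mu)$, I would use the $m$-a.e.\ bound $\abs{v\sgn\Phi}\leq\abs{\Phi}$ together with the standard fact that quasi-continuous representatives are unique up to q.e.\ equality and behave well under $\abs{\cdot}$; this yields $\abs{\widetilde{v\sgn\Phi}}\leq\abs{\tilde\Phi}$ q.e. Since $\mu$ is smooth and hence charges no exceptional set, $\tilde\Phi\in L^2(X,\mu)$ forces $\widetilde{v\sgn\Phi}\in L^2(X,\mu)$.

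For the key inequality I would split
\[
\Re\E^\mu(\Phi, v\sgn\Phi)=\Re\E(\Phi, v\sgn\Phi)+\Re\int_X\tilde\Phi\,\overline{\widetilde{v\sgn\Phi}}\,d\mu,
\]
bound the first summand from below by $\E(v,\abs{\Phi})$ via the self-domination of $\E$, and show that the perturbation summand is in fact real and non-negative. For the latter the pointwise identity $\Phi\,\overline{v\sgn\Phi}=v\abs{\Phi}$ holds $m$-a.e.\ (trivially on $\{\Phi=0\}$, where $v=0$, and by the definition of $\sgn\Phi$ elsewhere); passing to quasi-continuous representatives gives $\tilde\Phi\,\overline{\widetilde{v\sgn\Phi}}=\tilde v\,\abs{\tilde\Phi}$ q.e., so the integral equals $\int_X\tilde v\abs{\tilde\Phi}\,d\mu\geq 0$. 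Combining these bounds yields exactly $\E(v,\abs{\Phi})\leq\Re\E^\mu(\Phi,v\sgn\Phi)$, which is the required estimate.

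The one delicate point is the passage from $m$-a.e.\ algebraic identities among $D(\E)$-functions to q.e.\ identities among their quasi-continuous versions (used both for $\abs{\widetilde{v\sgn\Phi}}\leq\abs{\tilde\Phi}$ and for $\tilde\Phi\,\overline{\widetilde{v\sgn\Phi}}=\tilde v\abs{\tilde\Phi}$). I expect this to be the main technical step but not a real obstacle, as it is a standard feature of the quasi-regular Dirichlet form framework.
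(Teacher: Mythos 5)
Your proposal is correct and follows essentially the same route as the paper's proof: self-domination of $\E$ for the membership statements and the $\E$-inequality, the q.e.\ identity $\abs{\widetilde{v\sgn\Phi}}=\tilde v\leq\abs{\tilde\Phi}$ to get $v\sgn\Phi\in D(\E^\mu)$, and nonnegativity of the perturbation term $\int_X\tilde v\abs{\tilde\Phi}\,d\mu$. The only difference is that you make explicit the standard passage from $m$-a.e.\ to q.e.\ identities among quasi-continuous versions, which the paper uses tacitly.
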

\begin{proof}
Let $u\in D(\E^\mu)$ and $v\in D(\E)$ with $0\leq v\leq\abs{u}$ a.e. Then $\abs{u}\in D(\E)$ and $v\sgn u\in D(\E)$ since $\E$ is dominated by itself. Furthermore, 
\begin{align*}
\int_X\abs{\widetilde{v\sgn u}}^2\,d\mu=\int_X\abs{\tilde v}^2\,d\mu\leq \int_X\abs{\tilde u}^2\,d\mu<\infty.
\end{align*}
Thus $v\sgn u\in D(\E^\mu)$.

Finally,
\begin{align*}
\E^\mu(u,v\sgn u)=\E(u,v\sgn u)+\int \tilde v\abs{\tilde u}\,d\mu\geq \E(u,v\sgn u)\geq \E(\abs{u},v)
\end{align*}
since $\E$ is dominated by itself.
\end{proof}

Now we can combine Lemma \ref{measure_pert_dom} with Theorem \ref{Kac_regular_stable} to obtain stability of Kac regularity under measure perturbations.
\begin{theorem}\label{stable_meas_pert}
If an open set $\Omega\subset X$ is Kac regular for $\E$, then it is Kac regular for $\E^\mu$ for every smooth measure $\mu$.
\end{theorem}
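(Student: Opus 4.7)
The plan is to simply combine the two key ingredients that have just been established: Lemma \ref{measure_pert_dom} and Theorem \ref{Kac_regular_stable}. Indeed, the hypothesis of Theorem \ref{stable_meas_pert} is exactly what is needed to feed into the abstract stability theorem of Section \ref{abstract_theory}.

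Concretely, I would set $\a := \E^\mu$ and $\b := \E$, both viewed as closed quadratic forms on $L^2(X,m)$ (i.e.\ on sections of the trivial line bundle over $X$). By Lemma \ref{measure_pert_dom}, $\E^\mu$ is dominated by $\E$. Since by hypothesis $\Omega$ is Kac regular for $\E$, Theorem \ref{Kac_regular_stable} applied to this pair immediately yields that $\Omega$ is Kac regular for $\E^\mu$.

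I do not expect any genuine obstacle here, since all the substantive work has been packaged into Lemma \ref{measure_pert_dom} (which supplies the domination) and Theorem \ref{Kac_regular_stable} (which supplies the abstract stability statement via the density result from \cite{LSW16}). The only minor sanity check worth mentioning is that $\E^\mu$ is genuinely a closed quadratic form, which is guaranteed since it is even a quasi-regular Dirichlet form by the cited result of \cite{RS95}; thus the hypotheses of Theorem \ref{Kac_regular_stable} are met and no further argument is required.
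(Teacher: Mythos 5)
Your proposal is correct and coincides with the paper's own argument: the theorem is stated there precisely as the combination of Lemma \ref{measure_pert_dom} (domination of $\E^\mu$ by $\E$, with closedness of $\E^\mu$ guaranteed since it is a quasi-regular Dirichlet form by \cite{RS95}) with the abstract stability result Theorem \ref{Kac_regular_stable}. Nothing further is needed.
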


\section{Metric measure spaces}\label{mms}

In this section we apply the results of the previous section to the Cheeger energy on infinitesimally Riemannian metric measure spaces.

Let $(X,d)$ be a complete, separable metric space and $m$ a $\sigma$-finite Borel measure of full support on $X$ satisfying $m(B_r(x))<\infty$ for all $x\in X$, $r>0$. Denote by $\mathrm{Lip}_{\mathrm{b}}(X,d)$ the space of all Lipschitz functions on $X$. For $f\in \mathrm{Lip}_{\mathrm{b}}(X,d)$ the local Lipschitz constant is defined as
\begin{align*}
\abs{Df}(x)=\limsup_{y\to x}\frac{\abs{f(x)-f(y)}}{d(x,y)},
\end{align*}
The Cheeger energy $\Ch$ is the $L^2$-lower semicontinuous relaxation of
\begin{align*}
\mathrm{Lip}_{\mathrm{b}}(X,d)\cap L^2(X,m)\lra [0,\infty],\,f\mapsto \frac 1 2\int_X \abs{Df}^2\,dm.
\end{align*}

If $\Ch$ is a quadratic form, then $(X,d,m)$ is called \emph{infinitesimally Riemannian} (see e.g. \cite{AGS15,Gig15}). In this case, $\Ch$ is a quasi-regular Dirichlet form (\cite{Sav14}, Theorem 4.1 -- the theorem is formulated for $\mathrm{RCD}$ spaces, but the proof of quasi-regularity holds under the weaker assumptions stated above).

Let $(X,d,m)$ be an infinitesimally Riemannian metric measure space and $\Ch$ the Cheeger energy. The space $\Lipbs(X,d)$ of Lipschitz functions with bounded support is $\norm\cdot_{\Ch}$-dense in $D(\Ch)$, as follows e.g. from \cite{AGS14}, Lemma 4.3, combined with a standard localization argument. From that fact it is easy to see that $\Lipbs(X,d)$ is indeed a generalized special standard core for $\Ch$.

The results from the last section (Corollary \ref{char_q.e.}, Lemma \ref{char_core}) immediately give the following characterization of the first order Sobolev space with Dirichlet boundary conditions on domains in infinitesimally Riemannian metric measure spaces.

\begin{theorem}\label{char_Sobolev}
For $\Omega\subset X$ open, the following spaces all coincide:
\begin{itemize}
\item The $\norm\cdot_{\Ch}$-closure of $\{u\in D(\Ch)\mid \supp u\subset \Omega,\,\supp u\text{ compact}\}$. 
\item The $\norm\cdot_{\Ch}$-closure of $\{u\in\Lipbs(X,d)\mid \supp u\subset \Omega\}$.
\item The set $\{u\in D(\Ch)\mid \tilde u=0\text{ q.e. on }\Omega^c\}$.
\end{itemize}
\end{theorem}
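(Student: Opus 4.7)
The plan is to identify all three spaces with $D(\Ch_\Omega)$ (as defined in Section \ref{abstract_theory}), using the machinery developed in Section \ref{Dirichlet_forms}. The key inputs are Corollary \ref{char_q.e.}, Lemma \ref{compactly_supported}, Lemma \ref{char_core}, and the fact (already noted in the paragraph preceding the theorem) that $\Lipbs(X,d)$ is a generalized special standard core for $\Ch$.

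First, I would identify the third space with $D(\Ch_\Omega)$. Since $\Ch$ is a quasi-regular Dirichlet form on $L^2(X,m)$ and $\Omega$ is open, Corollary \ref{char_q.e.} yields $D(\Ch_\Omega)=D(\Ch)_\Omega=\{u\in D(\Ch)\mid \tilde u=0\text{ q.e. on }\Omega^c\}$, which is exactly the third space.

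Next, the second space is $D(\Ch_\Omega)$ by direct application of Lemma \ref{char_core} to $\C=\Lipbs(X,d)$: the $\norm\cdot_{\Ch}$-closure of $\{u\in\Lipbs(X,d)\mid \supp u\subset\Omega\}$ coincides with $D(\Ch_\Omega)$.

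For the first space, I would use Lemma \ref{compactly_supported}: there exists an ascending sequence $(F_k)$ of compact subsets of $\Omega$ such that $\bigcup_k D(\Ch)_{F_k}$ is $\norm\cdot_{\Ch}$-dense in $D(\Ch)_\Omega$. The step that needs a brief verification is that every $u\in D(\Ch)_{F_k}$ satisfies $\supp u\subset F_k$ in the sense adopted in Section \ref{abstract_theory} (support of the measure $\abs u m$): indeed, $u=0$ q.e. on $F_k^c$ implies $u=0$ $m$-a.e. on $F_k^c$, so $\abs u m$ is concentrated on the closed set $F_k$. Hence every such $u$ has compact support contained in $\Omega$ and lies in the generating set of the first space. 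This gives the inclusion $D(\Ch)_\Omega \subset$ (first space); the converse inclusion is immediate because functions with compact support in $\Omega$ vanish q.e.\ on $\Omega^c$, and this property is preserved under $\norm\cdot_{\Ch}$-limits (as in the lemma of Section \ref{abstract_theory} showing $D(\tilde\a_\Omega)$ is closed).

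There is no real obstacle here; the proof is a matter of cleanly combining the results of Section \ref{Dirichlet_forms}. The only subtlety worth flagging is the passage between \emph{a.e.} and \emph{q.e.} vanishing, and the identification of the notion of support used in Section \ref{abstract_theory} (support of $\abs u m$) with the support of a continuous Lipschitz representative in the second space — in both cases the inclusions $\supp u\subset\Omega$ (for $u\in\Lipbs$) and $\supp u\subset F_k\subset\Omega$ (for $u\in D(\Ch)_{F_k}$) hold with either convention, so no ambiguity arises.
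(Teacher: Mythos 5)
Your proposal is correct and follows essentially the same route as the paper, which derives the theorem directly from Corollary \ref{char_q.e.} (third space $=D(\Ch_\Omega)$), Lemma \ref{char_core} applied to the generalized special standard core $\Lipbs(X,d)$ (second space), and Lemma \ref{compactly_supported} (first space). Your explicit verifications --- that q.e.\ vanishing implies a.e.\ vanishing so that elements of $D(\Ch)_{F_k}$ have support in $F_k$, and that the two notions of support agree on the relevant inclusions --- are exactly the routine details the paper leaves implicit behind the word ``immediately,'' so there is nothing to correct.
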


Now we turn to perturbations of the Cheeger energy by a positive potential.

For measurable $V\colon X\lra [0,\infty]$ let
\begin{align*}
\Ch^V\colon L^2(X,m)\lra [0,\infty],\,\Ch_V(u)=\Ch(u)+\int_X \abs{u}^2V\,dm.
\end{align*}

\begin{theorem}\label{stability_potential}
If $\int_K V\,dm<\infty$ for all compact $K\subset X$, then $\Ch^V$ is a quasi-regular Dirichlet form, and if $\Omega$ is Kac regular for $\Ch$, so it is for $\Ch^V$.
\end{theorem}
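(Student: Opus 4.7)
The plan is to recognize $\Ch^V$ as a measure perturbation $\Ch^\mu$ of the Cheeger energy, with $\mu$ the Borel measure defined by $d\mu = V\,dm$, and then to invoke the results already established in Section~\ref{Dirichlet_forms}.

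The central step is to verify that $\mu$ is smooth in the sense introduced before Lemma~\ref{measure_pert_dom}. Two properties must be checked. First, $\mu$ charges no exceptional set: since $m$ itself charges no exceptional set for quasi-regular Dirichlet forms (a standard consequence of the definition of a nest) and $\mu \ll m$, this is immediate. Second, a nest of compact sets of finite $\mu$-measure must exist; I obtain one by applying Lemma~\ref{compactly_supported} with $B = X$, which produces an ascending sequence $(F_k)$ of compact subsets of $X$ such that $\bigcup_k D(\Ch)_{F_k}$ is dense in $D(\Ch)_X = D(\Ch)$, hence a nest, and the local integrability hypothesis on $V$ forces $\mu(F_k) = \int_{F_k} V\,dm < \infty$.

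Next I would identify $\Ch^V$ with $\Ch^\mu$. Since $\mu \ll m$, the quasi-continuous representative $\tilde u$ agrees with $u$ also $\mu$-a.e., so $\int_X \tilde u^2\,d\mu = \int_X u^2 V\,dm$ for every $u \in D(\Ch)$ (allowing the value $+\infty$); consequently $D(\Ch^V) = D(\Ch^\mu)$ and the two energies coincide on this common domain. The cited Proposition~2.3 of \cite{RS95} then yields that $\Ch^V = \Ch^\mu$ is a quasi-regular Dirichlet form, proving the first assertion.

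For the second assertion, Theorem~\ref{stable_meas_pert} applied to $\E = \Ch$ and the smooth measure $\mu$ transfers Kac regularity of $\Omega$ from $\Ch$ to $\Ch^\mu = \Ch^V$. The only genuinely nontrivial input is the smoothness of $\mu$, whose proof reduces entirely to a single invocation of Lemma~\ref{compactly_supported}; the remaining work is bookkeeping.
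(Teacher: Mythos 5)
Your proposal is correct and takes essentially the same route as the paper: recognize $\Ch^V$ as the perturbation of $\Ch$ by the smooth measure $\mu = Vm$ and invoke Theorem \ref{stable_meas_pert}; your detour through Lemma \ref{compactly_supported} with $B=X$ to produce the compact nest is harmless, since quasi-regularity of $\Ch$ already provides a nest of compact sets directly. You in fact supply details the paper elides --- the identification $\Ch^V=\Ch^{\mu}$ via $\mu\ll m$ and the explicit appeal to Theorem \ref{stable_meas_pert}, which is evidently what the paper's proof intends where it cites Lemma \ref{char_core}.
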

\begin{proof}
Since $\Ch$ is quasi-regular, there is a nest $(F_k)_k$ of compact sets. By assumption $Vm(F_k)<\infty$ for all $k\in\mathbb{N}$. Of course, $Vm$ does not charge measures of capacity zero. Hence $Vm$ is a smooth measure, and the assertions follow from Theorem \ref{char_core}.
\end{proof}

\section{Schrödinger operators on Riemannian manifold}\label{Euclidean}

In this section Kac regularity for quadratic forms generated by Schrödinger operators on vector bundles over manifolds is examined.

Let $(M,g)$ be a Riemannian manifold and
\begin{align*}
\E\colon W^{1,2}_0(M)\lra [0,\infty],\,u\mapsto \int_M \abs{\nabla u}^2\,d\vol_g.
\end{align*}
The space $C_c^\infty(\Omega)$ is obviously a special standard core for $\E$ and so $D(\E_\Omega)$ coincides with $W^{1,2}_0(\Omega)$ for $\Omega\subset M$ open.

Further let $E\lra M$ be a Hermitian vector bundle with metric covariant derivative $\nabla$ and let $V\colon M\lra \mathrm{End}(E)$ be a measurable section with $V(x)\geq 0$ for a.e. $x\in M$. Denote by $\Gamma_{C_c^\infty}(M;E)$ the smooth section of $E$ with compact support and let $\E^{\nabla,V}$ be the closure of
\begin{align*}
\Gamma_{C_c^\infty}(M;E)\lra [0,\infty),\,\Phi\mapsto\int_M (\abs{\nabla \Phi(x)}_x^2+\langle V(x)\Phi(x),\Phi(x)\rangle_x)\,d\vol_g(x).
\end{align*}
From the proof of \cite{Gue14}, Proposition 2.2, together with the semigroup characterization of domination (\cite{LSW16}, Theorem 1.33) it follows that $\E^{\nabla,0}$ is dominated by $\E$. Given this result it is not hard to see (with an argument along the lines of Lemma \ref{measure_pert_dom}) that $\E^{\nabla,V}$ is dominated by $\E$ as well. Thus we can apply Theorem \ref{Kac_regular_stable} to obtain the stability of Kac regularity in this setting.

\begin{theorem}\label{stability_Schrödinger}
If $\Omega$ is Kac regular for $\E$, then it is also Kac regular for $\E^{\nabla,V}$.
\end{theorem}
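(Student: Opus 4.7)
The strategy follows the sketch in the paragraph immediately preceding the theorem: verify that $\E^{\nabla,V}$ is dominated by $\E$, and then apply Theorem~\ref{Kac_regular_stable} with $\a=\E^{\nabla,V}$ and $\b=\E$. The starting point, already cited, is that $\E^{\nabla,0}$ is dominated by $\E$, obtained from \cite{Gue14} combined with the semigroup characterization of domination in \cite{LSW16}. What remains is to incorporate the zeroth-order potential $V$, which I would do by mimicking the short argument of Lemma~\ref{measure_pert_dom}.

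Fix $\Phi\in D(\E^{\nabla,V})$ and $v\in D(\E)$ with $0\leq v\leq\abs{\Phi}$. Domination of $\E^{\nabla,0}$ by $\E$ immediately gives $\abs{\Phi}\in D(\E)$ and $v\sgn\Phi\in D(\E^{\nabla,0})$. To upgrade the latter to $v\sgn\Phi\in D(\E^{\nabla,V})$, I would use the pointwise bound
\begin{align*}
\langle V(v\sgn\Phi),v\sgn\Phi\rangle_x = v(x)^2\,\langle V\sgn\Phi,\sgn\Phi\rangle_x \leq \abs{\Phi(x)}_x^2\,\langle V\sgn\Phi,\sgn\Phi\rangle_x = \langle V\Phi,\Phi\rangle_x,
\end{align*}
valid on $\{\Phi\neq 0\}$ since $V\geq 0$ and $v\leq\abs{\Phi}$, while both sides vanish on $\{\Phi=0\}$. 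Integrating against $\vol_g$ and using $\Phi\in D(\E^{\nabla,V})$ yields the missing integrability.

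For the domination inequality itself, split
\begin{align*}
\E^{\nabla,V}(\Phi,v\sgn\Phi)=\E^{\nabla,0}(\Phi,v\sgn\Phi)+\int_M v\,\langle V\Phi,\sgn\Phi\rangle\,d\vol_g.
\end{align*}
On $\{\Phi\neq 0\}$ the integrand of the second term equals $v\langle V\Phi,\Phi\rangle/\abs{\Phi}$, which is nonnegative by $V\geq 0$, and it vanishes elsewhere; hence that term is real and $\geq 0$. Taking real parts and invoking the domination of $\E^{\nabla,0}$ by $\E$ yields $\Re\E^{\nabla,V}(\Phi,v\sgn\Phi)\geq \E(v,\abs{\Phi})$, which is exactly the condition in the definition of domination. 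Theorem~\ref{Kac_regular_stable} then delivers the conclusion.

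I do not anticipate any serious obstacle: the whole substantive work has been imported from \cite{Gue14} and \cite{LSW16} via the domination of the Bochner form $\E^{\nabla,0}$ by the scalar Dirichlet energy $\E$. The only mild technical subtlety is keeping track of $\sgn\Phi$ on the zero set of $\Phi$, but since every factor paired with it vanishes there, the ambiguity is irrelevant in every estimate that appears.
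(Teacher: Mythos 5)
Your proposal is correct and is essentially identical to the paper's own proof, which is contained in the paragraph preceding the theorem: import the domination of $\E^{\nabla,0}$ by $\E$ from \cite{Gue14} and \cite{LSW16}, upgrade to $\E^{\nabla,V}$ by an argument ``along the lines of Lemma \ref{measure_pert_dom}'' (exactly the pointwise estimate and splitting you write out), and apply Theorem \ref{Kac_regular_stable}. The one step you share with the paper's sketch that is glossed over on both sides is the inference from $v\sgn\Phi\in D(\E^{\nabla,0})$ plus finiteness of $\int_M\langle V(v\sgn\Phi),v\sgn\Phi\rangle\,d\vol_g$ to $v\sgn\Phi\in D(\E^{\nabla,V})$, which implicitly identifies $D(\E^{\nabla,V})$, defined as a closure of $\Gamma_{C_c^\infty}(M;E)$, with the corresponding form-sum domain --- a true but nontrivial Kato--Simon type fact that the paper likewise subsumes under ``it is not hard to see.''
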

Note that $D(\E^{\nabla,V}_\Omega)$ is a priori larger than the closure of $\Gamma_{C_c^\infty}(\Omega;E)$. However, it follows from a standard approximation argument that these spaces indeed coincide and so the previous theorem recovers  Theorem 2.13 a) of \cite{BG17} in the case of \emph{nonnegative} potentials.

\end{document}